\theoremstyle{plain}
\newtheorem{theorem}{Theorem}[section]
\newtheorem{lemma}{Lemma}[section]
\newtheorem{prop}{Proposition}[section]
\theoremstyle{definition}
\newtheorem{defin}{Definition}[section]
\newcommand{\bggo}{\mathcal O}
\newcommand{\mf}[1]{\displaystyle{\mathfrak{#1}}}
\newcommand{\comment}[1]{}
\DeclareMathOperator{\Gr}{\ensuremath{gr}}
\DeclareMathOperator{\ad}{\ensuremath{ad}}
\DeclareMathOperator{\Sym}{\ensuremath{Sym}}
\begin{document}

\title{On Infinitesimal Cherednik algebras of $\mf{gl}_2$}
\author{ Akaki Tikaradze}

\address{The University of Toledo \hfill\newline Department of Mathematics
\hfill\newline Toledo, Ohio, USA \hfill\newline
e-mail: {\tt tikar@math.uchicago.edu}}

\begin{abstract}
We prove that the center of an infinitesimal Cherednik algebra of
$\mf{gl}_2$ is the polynomial algebra of two variables over the field of characteristic 0. 
In positive characteristic we show that any infinitesimal Cherednik algebra is a finitely generated module
over its center.
\end{abstract}

\maketitle
\section{Introduction}

For a given reductive algebraic group $G$ (over a field $k$) and its finite dimensional
representation $V$, Etingof, Gan, and Ginzburg ([EGG]) introduced a new family of algebras 
called continuous Hecke algebras and infinitesimal Hecke algebras as certain deformations
of the semi-direct product algebras $O(G)^*\ltimes TV$ and $\mf{U}\mf{g}\ltimes TV$ respectively.
 Here $O(G)^{*}$
is the dual of the ring of regular functions on $G,$ where multiplication
in $O(G)^{*}$ is defined by the convolution and $TV$ is the
the tensor algebra of $V$.
If $G$ is a finite group, then these algebras are Drinfeld's degenerate affine Hecke algebras
which include widely studied rational Cherednik algebras [EG]. Let us recall the precise definition
of infinitesimal Hecke algebras [EGG].

  Let $V$ be a finite dimensional module over a reductive Lie algebra $\mf{g}$ and let $\gamma:V\otimes V\to \mf{U}\mf{g}$ be a $\mf{g}$-invariant skew-symmetric pairing. Then one considers the algebra
  $H_\gamma$ defined as the quotient of $\mf{U}\mf{g}\ltimes TV$  by the relations
  $$[v, w]=\gamma(v, w),  v, w\in V.$$ There is a natural algebra filtration
on $H_{\gamma}$ defined by setting $deg(\alpha)=0, deg(v)=1$ for $\alpha\in\mf{Ug}, v\in V.$
It is clear there is a natural map  
$\mf{U}(\mf{g}\ltimes V)\to gr(H_{\gamma})$ which is a surjactive graded algebra homomorphism.
If this map is an isomorphism
then $H_{\gamma}$ is said to be an infinitesimal Hecke algebra. There is a special class of infinitesimal
Hecke algebras called infinitesimal Cherednik algebras, where one takes 
$V=k^n\oplus (k^n)^{*}$ and $\mf{g}\subset \mf{gl}_n$
with the natural action on $V$ and it is required that $\gamma(v, w)=0$ if $v, w\in k^n,$ or 
$v, w\in (k^n)^{*}.$ 


In this paper, we prove that for the case $n=2$ and
$\mf{g}=\mf{gl}_2,$ the center of $H_{\gamma}$ is isomorphic to the polynomial algebra in two variables and 
$gr(\mf{Z}(H_{\gamma}))$=$\mf{Z}(\mf{U}(\mf{gl}_2\ltimes V))$ (for arbitrary algebra $B$, we will denote its center by $\mf{Z}(B)$). We have proved a similar result for infinitesimal Hecke algebras of $\mf{sl}_2$ and $V=k^2$ in 0 and sufficiently large characteristic ([KT], [T1], [T2]). We also establish that if the ground field has positive characteristic, then for any Cherednik algebra $H_{\gamma}$ (for arbitrary $\mf{g}$ and $n$), its center is big, meaning that $H_{\gamma}$ is a finitely generated module over its center.
\section{The center}
From now on, $\mf{g}=\mf{gl}_2, n=2$ and char($k$)=0.
Let us now write a more explicit description of the algebra $H_{\gamma}.$ We have the 
basis $k^2 = k x \oplus k y$ 
\\and $(k^2)^{*} = k x_1 \oplus k y_1$ of
$k^2$ and $(k^2)^{*}$ respectively, and $\mf{gl}_2 = \mf{sl}_2 \oplus k
\tau$, where $\tau$ is the (central) identity matrix. Here we adopt
the convention that $e, f, h$ denote the standard basis elements of $\mf{sl}_2$ and
$$[e, x_1]=[e, x]=0, [f, x]=y, [f, x_1]=y_1, [\tau,x]=x, [\tau, x_1]=y_1.$$
In particular, $x, x_1$ have weight 1 (with respect to ad($h$) action) and $k^2$ has weight 1 with respect
to $\tau$, and $(k^2)^{*}$ has weight -1. 

It is shown in [EGG] that there exists an element $c\in \mf{Z}\mf{Ug}$ such that $y_1x-x_1y-c$ belongs
to the center of $H_{\gamma}.$ Thus, element $c$ completely determines $\gamma,$ so for this reason
we are going to denote $H_{\gamma}$ by $H_c.$
Therefore, $H_c$ is a quotient of $\mf{U}\mf{g}\ltimes T(V)$ by the relations
$$[x, y]=0=[x_1, y_1], [y_1x-x_1y, x]=[c, x]$$
 and we assume that gr$(H_c)=\mf{U}(\mf{g}\ltimes V)$
(thus $H_c$ is an infinitesimal Cherednik algebra). 

Remark that $H_c$ possesses an anti-involution $\eta :H_c\to H_c$ defined as follows [KT].
$$\eta(x)=y_1, \eta(x_1)=-y, \eta(e)=f, \eta(h)=h, \eta(\tau)=\tau.$$
The following is the main result of the paper.

\begin{theorem}\label{Tglcenter}
The center of $H_c$ is isomorphic to the polynomial algebra in two variables, generated by the elements
whose leading term with respect to the filtration are $b=y_1x - x_1 y$ and $d=\tau(y_1x-x_1y)-(2ey_1y+h(y_1x+x_1y)-2fx_1x),$ and gr$(\mf{Z}(H_c))=\mf{Z}(gr(H_c))$.
\end{theorem}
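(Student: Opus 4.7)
The plan is to prove the theorem in three stages: (1) identify $\mf{Z}(\Gr(H_c)) = \mf{Z}(\mf{U}(\mf{gl}_2 \ltimes V))$ as the polynomial algebra in two variables with the specified generators; (2) lift each generator to a central element of $H_c$; and (3) conclude by a filtration argument that these two lifts generate $\mf{Z}(H_c)$ freely and that $\Gr(\mf{Z}(H_c)) = \mf{Z}(\Gr(H_c))$.

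For stage (1), the most direct approach passes to the secondary PBW filtration on $\mf{U}(\mf{gl}_2 \ltimes V)$, whose associated graded is $\Sym(\mf{gl}_2 \ltimes V)$, and computes the Poisson center $\Sym(\mf{gl}_2 \ltimes V)^{\mf{gl}_2 \ltimes V}$. Since $\dim(\mf{gl}_2 \ltimes V) = 8$, it suffices to exhibit two algebraically independent coadjoint invariants together with a generic coadjoint orbit of dimension $6$ (via the rank of the Kirillov form at a generic point). The proposed $b$ and $d$ are manifestly $\mf{gl}_2$-equivariant by construction, and $V$-invariance is a direct bracket check using the explicit action of $\mf{gl}_2$ on $V$. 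Algebraic independence is transparent from the $\mf{g}$-degrees of $b$ and $d$ (one lies in $\Sym V$, the other has a term linear in $\mf{g}$). A filtration/induction argument identical to stage (3) below (applied with $c = 0$) then yields $\mf{Z}(\mf{U}(\mf{gl}_2 \ltimes V)) = k[b, d]$.

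For stage (2), the lift of $b$ is provided by [EGG] as $\tilde{b} = y_1 x - x_1 y - c$. For $d$, the natural approach is to write an ansatz $\tilde{d} = d + d_1 + d_0$ with $d_j$ of $V$-degree $j$ and coefficients in $\mf{U}(\mf{gl}_2)$, and impose $[\tilde{d}, x] = [\tilde{d}, x_1] = 0$ together with $\mf{gl}_2$-invariance. The anti-involution $\eta$ of [KT] then guarantees $[\tilde{d}, y] = [\tilde{d}, y_1] = 0$ for free, essentially halving the work. This is the main technical obstacle: one must verify consistency of the resulting linear system for the unknown coefficients of $d_1$ and $d_0$. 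The leading-order obstruction vanishes because $d$ is Poisson-central, and the remaining obstructions are handled by an explicit commutator computation exploiting the decomposition $\mf{gl}_2 = \mf{sl}_2 \oplus k\tau$.

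For stage (3), given any $z \in \mf{Z}(H_c)$ of filtration degree $N$, its symbol $\Gr(z)$ lies in $\mf{Z}(\Gr(H_c)) = k[b, d]$, so $\Gr(z) = P(b, d)$ for some polynomial $P$. Then $z - P(\tilde{b}, \tilde{d})$ is central of strictly smaller filtration degree, and induction on $N$ (base case $N = 0$, where the degree-zero part of $k[b, d]$ is just $k$) gives $z \in k[\tilde{b}, \tilde{d}]$. Algebraic independence of $\tilde{b}, \tilde{d}$ descends from that of their symbols, and the identity $\Gr(\mf{Z}(H_c)) = \mf{Z}(\Gr(H_c))$ is then immediate from the construction. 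The hardest part is stage (2): constructing the explicit lift $\tilde{d}$ requires computations leveraging the particular structure of $\mf{gl}_2$, and without the anti-involution $\eta$ the linear system would be roughly twice as large.
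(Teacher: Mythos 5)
Your three-stage outline matches the logical skeleton of the paper's argument (compute $\mf{Z}(\Gr(H_c))$, lift the generators, conclude by filtration induction), and stage (3) is fine. But there are two genuine gaps. First, in stage (1), exhibiting two algebraically independent invariants together with a generic coadjoint orbit of codimension $2$ does \emph{not} imply that the Poisson center equals $k[b,d]$: it only shows that the field of invariant rational functions has transcendence degree $2$, so the invariant ring could a priori contain elements algebraic over $k[b,d]$ but not in it (think of $k[f^2,g]\subsetneq k[f,g]$). Closing this requires an additional input --- e.g.\ a codimension-two condition on the singular locus of the moment/quotient map, or the explicit results of Ra\"{\i}s/Panyushev on semidirect products --- none of which you supply. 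The paper avoids this entirely by a direct computation inside $\mf{U}(\mf{gl}_2\ltimes V)$: it first shows $\mf{Z}(H)\cap\Sym(V)=k[b]$ by weight arguments, then shows $\mf{Z}(H)\cap\mf{sl}_2\cdot\mf{U}\mf{sl}_2\cdot\Sym(V)=0$ using the stabilizer subalgebras generated by $t_1,t_2$, and finally inducts on the degree in $\tau$.

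Second, and more seriously, stage (2) is where essentially all of the work lies, and you have only asserted that ``the remaining obstructions are handled by an explicit commutator computation.'' The missing idea is \emph{why} the linear system for the correction terms is consistent: it is not consistent for an arbitrary invariant pairing $\gamma$, but only because $c$ is constrained by the requirement that $H_c$ satisfy the PBW property. Concretely, the paper encodes $[\alpha,x]$ for $\alpha\in\mf{Z}(\mf{U}\mf{g})$ via two linear operators $F,G$ on $\mf{Z}(\mf{U}\mf{g})$, derives from the Jacobi identity $[[y_1,x],y]=[[y_1,y],x]$ in $H_c$ the constraint
$4\Delta F(F(c))+6F(c)-6G(F(c))-G(G(c))=0$,
and then shows that the obstruction to solving $[d,x]=[\alpha,x]$ (with $F(\alpha)=c+\Delta F(c)$) is \emph{exactly} this constraint. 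Your ansatz makes no reference to the PBW hypothesis on $H_c$, so as written the consistency check cannot go through; ``the leading-order obstruction vanishes because $d$ is Poisson-central'' addresses only the top-order term, not the lower-order obstruction where the constraint on $c$ enters. You do correctly identify the role of the anti-involution $\eta$ in halving the verification (commutation with $x$ plus $\mf{g}$-invariance yields commutation with $y_1$), which is exactly how the paper uses it.
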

We begin the proof by showing that $b, d$ freely generate the center of the associated graded algebra of
$H_c,$ which is just $\mf{U}(\mf{g}\ltimes V)$ (which from now on will be denoted by $H$). 

 For a reductive lie algebra $\mf{g}$ and its finite dimensional representation $V,$ the center of the enveloping
algebra $\mf{U}(\mf{g}\ltimes V)$ has been studied (see [R1], [R2], [P], [S]). But we wrote this proof before finding those references and
we include it for the sake of completeness. After this it will just remain to show that we may lift $d$ to a central element
of $H_c,$ since by the definition of $H_c$, $b-c$ is a central element.

Towards computing the center of $H,$ our first step is to show that
 $$\mf{Z}(H) \bigcap \Sym(V) = k[b].$$ Indeed, if we have a central element $\alpha =
\sum a_{i j i_1 j_1} x^i y^j x_1^{i_1} y_1^{j_1}$ (with all $a_{i j i_1
j_1}\in k$), then from $[\tau,\alpha]=0$ (since $H$ is graded, we assume
without loss of generality that $\alpha$ is homogeneous in $x, y, x_1,
y_1$) we get that $i + j = i_1 + j_1$.
Also, since $[h,\alpha]=0$ one gets $i + i_1 = j + j_1$, so that $\alpha$
may be written as: $\alpha = \sum_{i=0}^n a_i (x y_1)^i (x_1 y)^{n-i}$
(for some $n$, with $a_i$ constants). We have
\[ [e, \alpha] = \sum_{i=0}^n a_i \left( i (xx_1) (xy_1)^{i-1}
(x_1y)^{n-i} + (n-i) (x_1x) (xy_1)^i (x_1y)^{n-i-1} \right) = 0, \]

\noindent and cancelling $x x_1$ yields
\[ \sum_{i=0}^n \left( a_i (xy_1)^{i-1} (x_1y)^{n-i} i + a_i (n-i)
(xy_1)^i (x_1y)^{n-i-1} \right) = 0. \]

\noindent But this means that $(i+1)a_{i+1}=-(n-i)a_{n-i}\ \forall i$,
whence there is (at most) a unique central $\alpha$ for each $n$ up to
multiplication. Thus we must have $\alpha=c b^n$, for
some constant $c$. This proves our first claim.\medskip

In what follows we will use two subalgebras  $A_1, A_2$ of $H$, where
$A_1=\mf{U}(\mf{sl_2}\ltimes (kx\oplus ky)$), and $A_2=\mf{U}(\mf{sl_2}\ltimes (kx_1\oplus ky_1)).$
Clearly both of this algebras are isomorphic to $\mf{U}(\mf{sl_2}\ltimes k^2)$ (which will be denoted
by $A$). Here we will use the following anti-involution of $A$ (defined in [K]) and the proposition proved in [KT] 
$$j(x)=y, j(h)=h, j(e)=-f.$$
\begin{prop} Stabilizer of $k^2$ in $A$ as an algebra is generated by $t, x, y$ where $t=ey^2+hxy-fx^2$ 
is a generating central element of $A.$ Stabilizer of $\mf{g}$ in $A$ as an algebra is generated by $\Delta, t.$
\end{prop}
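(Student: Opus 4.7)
The plan is to pass from $A$ to its associated graded algebra $\Gr(A)=\Sym(\mf{sl}_2\ltimes k^2)=k[e,h,f,x,y]$ under the standard PBW filtration, turning each centralizer statement into a Poisson centralizer computation for the Kirillov--Kostant bracket inherited from $\mf{sl}_2\ltimes k^2$. I will write $t_0=ey^2+hxy-fx^2$ and $\Delta_0=h^2+4ef$ for the principal symbols of $t$ and of the Casimir $\Delta$. A direct bracket computation confirms that $t$ is central in $A$ and that $\Delta$ centralizes $\mf{sl}_2$ in $A$, so $k[t,x,y]$ and $k[\Delta,t]$ already sit inside the two centralizers in question, and it suffices to rule out anything else.

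For the centralizer of $k^2=kx\oplus ky$, the relations $\{x,h\}=-x$, $\{x,f\}=-y$, $\{y,e\}=-x$, $\{y,h\}=y$ convert $\{x,p\}=\{y,p\}=0$ into the linear first-order system
\[ x\,\partial_h p+y\,\partial_f p=0,\qquad x\,\partial_e p=y\,\partial_h p. \]
Treating $x,y$ as parameters and applying the method of characteristics to the first equation forces $p$ to be a function of $e$, $x$, $y$ and $yh-xf$; feeding this into the second and applying characteristics once more in the $(e,\,yh-xf)$ plane forces $p$ to be a function of $x$, $y$ and $y^2e+x(yh-xf)=t_0$. Hence the Poisson centralizer of $k^2$ in $\Gr(A)$ is precisely $k[x,y,t_0]$, which is polynomial by bi-degree inspection.

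For the centralizer of $\mf{sl}_2$, the Poisson bracket with $\mf{sl}_2$ is the infinitesimal form of an algebraic $SL_2$-action on $\Gr(A)$, so the Poisson centralizer coincides with the invariant ring $\Gr(A)^{\mf{sl}_2}$. Using the decomposition $\Sym^a(\mf{sl}_2)\cong V_{2a}\oplus V_{2a-4}\oplus\cdots$ (with $V_m$ the irreducible $\mf{sl}_2$-module of highest weight $m$), the trivial representation appears in $\Sym^a(\mf{sl}_2)\otimes\Sym^b(k^2)$ with multiplicity $1$ exactly when $b$ is even, $b\leq 2a$ and $2a-b\equiv 0\pmod 4$, and with multiplicity $0$ otherwise. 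This matches the bigraded Hilbert series of $k[\Delta_0,t_0]$ under $\deg\Delta_0=(2,0)$ and $\deg t_0=(1,2)$; since $k[\Delta_0,t_0]\subseteq\Gr(A)^{\mf{sl}_2}$, equality follows, and algebraic independence is immediate from the bigrading. Equivalently this is classical $SL_2$ invariant theory for one linear form together with one binary quadratic.

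The lift is standard. Given $a$ in either centralizer, its principal symbol lies in the corresponding Poisson centralizer and so equals some polynomial $P$ in the relevant symbols; replacing $a$ by $a-P(x,y,t)$, respectively $a-P(\Delta,t)$, yields an element of strictly smaller PBW degree still in the centralizer, and induction terminates. Algebraic independence in $A$ descends from that of the symbols. I expect the main obstacle to be pinning down $\Gr(A)^{\mf{sl}_2}$: the PDE argument for the $k^2$-centralizer is self-contained, but excluding invariants beyond those built from $\Delta_0$ and $t_0$ requires either the multiplicity bookkeeping above or an appeal to classical binary invariant theory.
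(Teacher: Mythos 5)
Your argument is essentially correct, but it takes a genuinely different route from the paper: the paper does not prove this proposition at all, deferring to [KT], and the argument there (reproduced in spirit in the draft proof of the related Proposition on maximal vectors appended to the source) is a direct induction inside the enveloping algebra --- one isolates the highest power of $y$ occurring without $x$ in a putative centralizing element, identifies its coefficient as a highest-weight vector $p(\Delta)e^{(j_0+1)/2}$, and subtracts an explicit multiple of $t^{n-1}[\Delta,x]$ to lower that power, with the weight-zero case reduced to the description of $\mf{Z}(\mf{U}(\mf{sl}_2\ltimes k^2))$. You instead pass to $\Gr(A)=\Sym(\mf{sl}_2\ltimes k^2)$ and compute Poisson centralizers, by characteristics for the $k^2$-centralizer and by classical invariant theory of a binary quadratic plus a linear form (or the equivalent multiplicity count, which checks out) for the $\mf{sl}_2$-centralizer, then lift by the standard symbol-subtraction induction; this is cleaner and connects the statement to the coadjoint-invariant literature ([R1], [R2], [P], [S]) that the paper itself cites for the graded center. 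The one point you should not elide is that the method of characteristics only identifies the invariants of the vector fields $x\partial_h+y\partial_f$ and $x\partial_e-y\partial_h$ after localizing at $x$ (or on a dense open set); to conclude that a \emph{polynomial} solution lies in $k[x,y,t_0]$ rather than $k[x,y,t_0,x^{-1}]$ you need the easy saturation check that $q(x,y,t_0)/x$ polynomial forces $q(0,y,ey^2)=0$, hence $x\mid q$. With that (and the char-$0$ identification of the $\ad\mf{sl}_2$-kernel with the $SL_2$-invariant ring, which is automatic here), your proof is complete; the verification that $t$ is central in $A$ and that $\Delta,t,x,y$ lie in the respective centralizers is the same short computation both approaches need.
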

Genereting central elements of $A_1, A_2$ will be denoted by $t_1, t_2$
thus $t_1=ey^2+hxy-fx^2, t_2=ey_1^2+hx_1y_1-fx_1^2.$

Next we show that $\mf{Z}(H)\bigcap \mf{sl_2} \cdot \mf{U}\mf{sl_2}
\cdot \Sym(V) = 0$. 
\\Let $g = \sum a_{i j i_1 j_1} x^i
y^j x_1^{i_1} y_1^{j_1}$ be an element from this intersection, with all
$a_{i j i_1 j_1} \in \mf{sl}_2 \cdot \mf{Usl}_2$.
As before, assuming homogeneity of $g$ and using $[\tau,g]=0$, $g$ may be
rewritten as $\sum_{i,j} a_{ij} x^i y^{n-i} x_1^j y_1^{n-j}.$ Since
$[g,x] = [g,y] = 0,$ we get $[\sum_i a_{ij} x^i y^{n-i}, x] = [\sum_i
a_{ij} x^i y^{n-i}, y] = 0,$ for each $j.$
Now the above Proposition  implies
that $\sum_ia_{ij}x^iy^{n-i}\in k[t_1,x,y],$ for each $j.$
Similarly $\sum _j a_{ij}x_1^jy_1^{n-j}\in
k[t_2,x_1,y_1]$ for each $i.$ Now we have 
\[ g = \sum_j \left( \sum_i \alpha_{ij} x^i y^{n-i} \right) x_1^j
y_1^{n-j} = \sum_i \left( \sum_j \alpha_{ij} x_1^j y_1^{n-j} \right) x^i
y^{n-i}. \]
   
If $t_1, t_2$ do not appear in this expression, then $g
\in k[x, y, x_1, y_1]$ and we are done. Otherwise, comparing terms 
with highest powers of $t_1, t_2$ appearing
in both sides of equality we get $t_{1}^n g_1 = t_{2}^n g_2,$ for
some $g_1, g_2 \in k[x, y, x_1, y_1].$ Thus,
$(ey^2 + hxy - fx^2)^n g_1 = (ey_1^2 + hx_1y_1 - fx_1^2)^n g_2.$
Comparing coefficients in front of $e^n, f^n,$ one gets
$y^{2n} g_1 = y_1^{2n} g_2,\ x^{2n} g_1 = x_1^{2n} g_2,$ which can not
happen.

Now let $g$ be an arbitrary central element, so $g=\sum_{n=0}^m\tau^ng_n,$
where $g_m\neq 0, g_n\in\mf{U}(\mf{sl}_2\ltimes V).$ We will argue by induction on $n$ (degree of $g$ in $\tau$)
that $g\in k[b, d].$ So far we have shown this for $n=0.$ We may assume that elements $g_i$ are homogeneous in $x, x_1, y, y_1.$ It is clear that $g_n$ being the top coefficient of $g$ must be central, therefore we may assume
that $g_n=b^m$ for some $m.$ If $n\leq m,$ then $g-d^nb^{m-n}$ is central and has degree $<n$ in $\tau,$ 
thus $g-d^nb^{m-n}\in k[b, d],$ which gives $g\in k[d, b].$ If $n>m,$ then $d^n-b^{n-m}g$ is a central element of
degree $<n$ in $\tau,$ so $b^{n-m}g\in k[b, d].$ The latter clearly implies that $g\in k[d, b].$

In what follows we make an extensive use of computations of commutators
of the form $[\alpha, x], [\beta, y]$ where $\alpha\in\mf{Z}(\mf{U}\mf{g})$ ([T1]).
Given that $[\alpha, x]$ is an element of $\mf{U}\mf{g}x\oplus\mf{U}\mf{g}y$ and commutes with
$e$ and has a weight $1$ (with respect to ad($h$) action), it is clear that we must have
$$[\alpha, x]=(2hF(\alpha)+G(\alpha))x+4eF(\alpha)y$$ 
$$[\alpha, y]=(-2hF(\alpha)+G(\alpha))y+4fF(\alpha)x,$$
Where $F, G$ are certain linear endomorphism of $\mf{Z}(\mf{U}\mf{g}).$ We would like to establish there
properties. First, let us recall the following computation from [T1]

\begin{lemma}
We have the following identities for arbitrary $\beta \in\mf{Z}(\mf{U}\mf{g})$
$$F(\Delta\beta)=\beta+(\Delta-1)F(\beta)-G(\beta)$$
$$G(\Delta\beta)=-3\beta-4F(\beta)\Delta+(\Delta+3)G(\beta)$$
\end{lemma}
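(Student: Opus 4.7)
The plan is to compute the commutator $[\Delta\beta, x]$ in two ways and match coefficients. Applying the defining expansion to $\alpha = \Delta\beta$ gives
$$[\Delta\beta, x] = (2hF(\Delta\beta) + G(\Delta\beta))x + 4eF(\Delta\beta)y.$$
On the other hand, since $\Delta$ and $\beta$ both lie in $\mf{Z}(\mf{U}\mf{g})$, the Leibniz rule yields
$$[\Delta\beta, x] = \Delta[\beta, x] + [\Delta, x]\beta.$$
Once the right-hand side is put into the normal form $A\cdot x + B\cdot y$ with $A, B \in \mf{U}\mf{g}$, reading off the coefficients of $x$ and $y$ will recover the two claimed identities for $F(\Delta\beta)$ and $G(\Delta\beta)$.

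Concretely, I would first compute $[\Delta, x]$ directly from $\Delta = h^2 + 2ef + 2fe$ and the relations $[h,x]=x$, $[e,x]=0$, $[f,x]=y$, obtaining $[\Delta, x] = (2h-3)x + 4ey$. Next I would expand $[\Delta,x]\beta$ in normal form by pushing $\beta$ past $x$ and $y$ via $x\beta = \beta x - [\beta,x]$ and $y\beta = \beta y - [\beta, y]$, substituting the given expansions for both $[\beta, x]$ and $[\beta, y]$. Combining with $\Delta[\beta, x]$ and collecting the $x$ and $y$ terms yields a cumbersome expression containing $h^2 F(\beta)$ and $efF(\beta)$ pieces together with various linear and constant $h$ contributions, all multiplied into central quantities $F(\beta), G(\beta), \Delta$.

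The decisive simplification is to invoke the Casimir identity $h^2 + 4ef = \Delta + 2h$ in order to reabsorb the stray $-4h^2F(\beta)$ and $-16efF(\beta)$ terms arising in the coefficient of $x$ into a clean $-4\Delta F(\beta) - 8hF(\beta)$. After this, the coefficient of $y$ (using also $[h,e] = 2e$ to swap $eh$ against $he$) collapses to $4e(\beta + (\Delta-1)F(\beta) - G(\beta))$, which forces $F(\Delta\beta) = \beta + (\Delta-1)F(\beta) - G(\beta)$. Substituting this back into the coefficient of $x$ and extracting the $2hF(\Delta\beta)$ contribution then isolates $G(\Delta\beta) = -3\beta - 4\Delta F(\beta) + (\Delta + 3)G(\beta)$.

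The main obstacle is not conceptual but combinatorial: every commutator correction generated by moving $\beta$ past $x$ or $y$ itself depends on $F(\beta)$ and $G(\beta)$, so many terms must be tracked, and one then has to split the coefficient of $x$ cleanly into its $h$-linear and $h$-free parts to read off $F(\Delta\beta)$ and $G(\Delta\beta)$ separately. The Casimir identity $h^2 + 4ef = \Delta + 2h$ is the single non-routine input; everything else is bookkeeping and consistent use of the centrality of $\beta$, $\Delta$, $F(\beta)$, and $G(\beta)$ in $\mf{U}\mf{g}$.
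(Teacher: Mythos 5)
Your proposal is correct and follows essentially the same route as the paper: expand $[\Delta\beta,x]$ by the Leibniz rule, normal-order using the known expansions of $[\beta,x]$, $[\beta,y]$, and $[\Delta,x]=(2h-3)x+4ey$, and read off the coefficients of $x$ and $y$; the verification that the $h^2F(\beta)$ and $efF(\beta)$ terms recombine via $h^2+4ef=\Delta+2h$ checks out. The only (immaterial) difference is that you push $\beta$ past $x,y$ in $[\Delta,x]\beta$ whereas the paper pushes $\Delta$ in $[\beta,x]\Delta$.
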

\begin{proof}We have
\begin{eqnarray*}
[\Delta\beta, x]&=&\beta ((2h-3)x+4ey)+(2hF(\beta)+G(\beta))x\Delta+4fF(\beta)y\Delta\\
&=& (2h(\beta+(\Delta-1)F(\beta)-G(\beta))+-3\beta-4F(\beta)\Delta\\
&+&(\Delta+3)G(\beta))x+4f(\beta+(\Delta-1)F(\beta)-G(\beta))y. 
\end{eqnarray*}

Thus we are done.
\end{proof}
For any $\psi(\tau)\in k[\tau],$ we have the following commutator formulas
$$[\psi(\tau), x]=\psi'(\tau)x, [\psi(\tau), y]=\psi'(\tau)y,$$ 
where $\psi'(\tau)$ denotes $\psi(\tau)-\psi (\tau-1).$

Linear endomorphisms $F, G$ are related to each other in the following way.
\begin{lemma} For any $\alpha\in \mf{Z}(\mf{U}\mf{g})$ there exist an element $\beta\in \mf{Z}(\mf{U}\mf{g})$
such that $F(\beta)=\alpha,$ also $F(\alpha)=0$ if and only if $\alpha\in k[\tau]$ and the following equality holds
 $$G(F(\alpha))=F(G(\alpha))+2F(F(\alpha)).$$

\end{lemma}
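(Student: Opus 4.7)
The plan is to work throughout with the identification $\mf{Z}(\mf{U}\mf{g})=k[\tau,\Delta]$ and to induct on the $\Delta$-degree, using the recursion from the previous lemma at each step. The base case $\alpha\in k[\tau]$ is immediate: from $[\psi(\tau),x]=\psi'(\tau)x$ one reads off $F(\psi(\tau))=0$ and $G(\psi(\tau))=\psi'(\tau)\in k[\tau]$, which simultaneously gives $k[\tau]\subseteq\ker F$ and the identity in the base case, since $F$ then annihilates $G(\psi)$ as well.

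For the kernel and surjectivity statements I first establish by induction on $i$ that $F(\Delta^i)=i\,\Delta^{i-1}+(\text{strictly lower }\Delta\text{-degree})$ and that $G(\Delta^i)$ has $\Delta$-degree at most $i-1$; both follow from the recursion formulas by simply tracking leading $\Delta$-coefficients. As a consequence, for arbitrary $\alpha=\sum_{i=0}^n\alpha_i(\tau)\Delta^i$ with $n\ge 1$ and $\alpha_n\ne 0$, the leading $\Delta$-coefficient of $F(\alpha)$ equals $n\,\alpha_n(\tau)\ne 0$, so $F(\alpha)\ne 0$; this finishes the kernel description. Surjectivity follows by the same leading-term analysis: to hit a target of $\Delta$-degree $n$ with top coefficient $\gamma_n(\tau)$, subtract off $F$ applied to a suitable multiple of $\Delta^{n+1}$ to strictly decrease the $\Delta$-degree, and iterate down to the degree-zero case, which is handled directly by the identity $F(\psi(\tau+1)\Delta)=\psi(\tau)$ (an immediate consequence of the recursion together with $G(\psi(\tau+1))=\psi(\tau+1)-\psi(\tau)$).

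For the identity $G(F(\alpha))=F(G(\alpha))+2F(F(\alpha))$ I again induct on the $\Delta$-degree. By $k$-linearity it suffices, for the inductive step, to verify the identity for $\Delta\alpha$ assuming it for $\alpha$. I substitute $F(\Delta\alpha)$ and $G(\Delta\alpha)$ via the recursion and then expand each of $G(F(\Delta\alpha))$, $F(G(\Delta\alpha))$ and $F(F(\Delta\alpha))$ by a second application of the recursion (using that $F(\alpha)$, $G(\alpha)$ and $\Delta$ all lie in $\mf{Z}(\mf{U}\mf{g})$ and therefore commute, so that the recursion applies to $\Delta F(\alpha)$ and $\Delta G(\alpha)$). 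After collecting the resulting terms and cancelling, the expression $G(F(\Delta\alpha))-F(G(\Delta\alpha))-2F(F(\Delta\alpha))$ collapses to $\Delta\bigl(G(F(\alpha))-F(G(\alpha))-2F(F(\alpha))\bigr)$, which vanishes by the inductive hypothesis.

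The hard part will be this last calculation: roughly a dozen terms produced by the nested expansion must collapse, and the cancellation hinges on the precise numerical coefficients $-3,\ -4,\ +3$ appearing in the formula for $G(\Delta\beta)$. Organizing the expansion by separately tracking the contributions of $F(\alpha),\ G(\alpha),\ F(F(\alpha)),\ F(G(\alpha)),\ G(F(\alpha)),\ G(G(\alpha))$ and their $\Delta$-multiples keeps the bookkeeping manageable and makes the final reduction to $\Delta$ times the inductive hypothesis transparent.
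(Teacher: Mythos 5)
Your proof is correct, and it takes a genuinely different route from the paper's. For the key identity $G(F(\alpha))=F(G(\alpha))+2F(F(\alpha))$, the paper first treats $\alpha\in k[\Delta]$ by applying the anti-involution $j$ of the subalgebra $A=\mf{U}(\mf{sl}_2\ltimes k^2)$ to $[\alpha,x]$ and equating coefficients, and then extends to general central elements via the explicit twist formulas $F(\psi(\tau)\beta)=\psi(\tau-1)F(\beta)$ and $G(\psi(\tau)\beta)=\psi(\tau-1)G(\beta)+\psi'(\tau)\beta$; you instead derive everything formally from the recursion of the preceding lemma by induction on $\Delta$-degree, and I have checked that your claimed cancellation does occur: writing $a=F(\alpha)$, $g=G(\alpha)$ and expanding, one finds $G(F(\Delta\alpha))-F(G(\Delta\alpha))-2F(F(\Delta\alpha))=-2\Delta F(a)+\Delta G(a)-\Delta F(g)=\Delta\bigl(G(F(\alpha))-F(G(\alpha))-2F(F(\alpha))\bigr)$, exactly as you predicted. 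Your route buys a self-contained, purely combinatorial proof that never invokes the anti-involution or the subalgebra $A$, at the cost of the dozen-term bookkeeping; the paper's route buys a shorter conceptual derivation of the identity but needs the extra structural input. Two small inaccuracies in your leading-term analysis are worth fixing, though neither damages the argument: since $F$ is $k$-linear but not $k[\tau]$-linear (it twists by $\tau\mapsto\tau-1$), the leading coefficient of $F\bigl(\sum_i\alpha_i(\tau)\Delta^i\bigr)$ is $n\,\alpha_n(\tau-1)$ rather than $n\,\alpha_n(\tau)$, and $G(\psi(\tau)\Delta^i)$ has $\Delta$-degree $i$ (with leading coefficient $\psi'(\tau)$), not $i-1$, when $\psi$ is non-constant — so the induction for the kernel/surjectivity claims should be run on elements $\psi(\tau)\Delta^i$ directly rather than deduced from the pure powers $\Delta^i$. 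With that adjustment the nonvanishing of the leading coefficient, hence $\ker F=k[\tau]$ and the surjectivity of $F$, go through exactly as you describe.
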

\begin{proof}
At first, we will prove the lemma when $\alpha\in k[\Delta]$. In this case we have that $[\alpha, x], [\alpha, y]\in A$ and
$[\alpha, x]=(2hF(\alpha)+G(\alpha))x+4eF(\alpha)y,$ after applying the anti-involution $j$ to
$[\alpha, x]$ we get
\begin{eqnarray*}
[\alpha, y]&=& -y(2hF(\alpha)+G(\alpha))+x4fF(\alpha)\\
&=&4fF(\alpha)x+(-2hF(\alpha)-G(\alpha))y-[4fF(\alpha), x]+\\
&&[2fF(\alpha)+G(\alpha), y]\\
&=&4fF(\alpha)x+(-2hF(\alpha)+G(\alpha))y.
\end{eqnarray*}
Hence, $2G(\alpha)y=[2hF(\alpha)+G(\alpha), y]-[4fF(\alpha), x].$ We have

\[[4fF(\alpha), x]=4F(\alpha)y+4f[F(\alpha),x]-4[F(\alpha), y] \]
\[[2hF(\alpha)+G(\alpha), y]=[G(\alpha), y]-2F(\alpha)y+(2h+2)[F(\alpha), y].\]

So,
\[2G(\alpha)y=[G(\alpha), y]-6F(\alpha)y+2h[F(\alpha), y]+6[F(\alpha), y]-4f[F(\alpha, x].\]

Equating similar terms we get,
\[4fF(G(\alpha))+2h4fF(F(\alpha))+6\times 4fF(F(\alpha))-4f(2hF(F(\alpha))+G(F(\alpha))=0,\]
so $G(F(\alpha))=F(G(\alpha))+2F(F(\alpha)).$
It is easy to check that $F(\Delta^n)$ is a polynomial of degree $n$ in $\Delta$, thus $F$ is surjective
when restricted to $k[\Delta].$
Now let us consider $\alpha=\psi (\tau)\beta$, where $\psi$ is a polynomial in $\tau$ and $\beta\in k[\Delta].$
We have 
\begin{eqnarray*}
[\psi(\tau)\beta, x] &=& \psi(\tau)[\beta, x]+\psi'(\tau)x\beta\\
&=& (\psi(\tau)-\psi'(\tau))[\beta, x]+\psi'(\tau)\beta x.
\end{eqnarray*}
Thus,
\begin{eqnarray*}
F(\psi(\tau)\beta)&=&\psi(\tau-1)F(\beta),\\ 
G(\psi(\tau)\beta)&=&\psi(\tau-1)G(\beta)+\psi'(\tau)\beta.
\end{eqnarray*}
Then we have
\begin{eqnarray*}
G(F(\psi(\tau)\beta))&=& G(\psi(\tau-1)F(\beta)=\\
&& \psi(\tau-2)G(F(\beta))+\psi(\tau-1)'F(\beta),
\end{eqnarray*}
\begin{eqnarray*}
F(G(\psi(\tau)\beta))&=& F(\psi(\tau-1)G(\beta)+\psi'(\tau)\beta)=\\
&& \psi(\tau-2)F(G(\beta))+\psi'(\tau-1)F(\beta)
\end{eqnarray*}
and since $\psi(\tau-1)'=\psi(\tau-1)$ and $F(F(\psi(\tau)\beta))=\psi(\tau-2)F(F(\beta))$ we get that
$$G(F(\psi(\tau)\beta))=F(G(\psi(\tau)\beta))+2F(F(\psi\beta)).$$
Since elements of type $\psi(\tau)\beta$ span $\mf{Z}(\mf{U}\mf{g})$, we are done with the formula. We also get
that $F$ is an epimorphism of $\mf{Z}(\mf{U}\mf{g}),$ and as $F(\alpha)$ has degree 1 less that $\alpha$ in $\Delta,$
 thus $F(\alpha)=0$ implies that $\alpha\in k[\tau].$

\end{proof}
Now, our goal is to show that there exist $\alpha\in\mf{Z}(\mf{Ug})$ such that $[d, x]=[\alpha, x].$ Then $d-\alpha$
will be a central element since  after applying the anti-involution $\eta$ to
$[d-\alpha, x]=0$ gives $[d-\alpha, y_1]=0.$ This together with the fact that
$d$ commutes with $\mf{g}$ implies that $d-\alpha\in \mf{Z}(H_c).$ At first, we will get a condition on $c$ 
which is a necessary condition for $H_c$ to satisfy a PBW property.We have 
$$[y_1x-x_1y, x]=[y_1, x]x-[x_1, x]y=[c, x].$$
So
$$[y_1, x]=2hF(c)+G(c), [x_1, x]=-4eF(c), [y_1, y]=4fF(c).$$
Thus, 
\begin{eqnarray*}
&&[[y_1, x], y]=[2hF(c)+G(c), y]=\\
&&(-2hF(G(c))+G(G(c)))y+4fF(G(c))x-2F(c)y+\\
&&(2h+2)(-2F(F(c))h+G(F(c)))y+4fF(F(c))x),
\end{eqnarray*}
on the other hand
\begin{eqnarray*}
&&[y_1, x], y]= [[y_1, y], x]=[4fF(c), x]]=4(F(c)y+[F(c), x]f)=\\
&&4(F(c)y+f[F(c), x]-[F(c)y])=4(f(2hF(F(c))x+G(F(c))x)+\\
&&4eF(F(c))y)-(-2hF(F(c))+G(F(c)))y+4fF(F(c))x)+F(c)y).
\end{eqnarray*}
Equating coefficients of $y$ we get
\begin{eqnarray*}
-2hF(G(c))+G(G(c))-2F(c)+(2h+2)(-2h)F(F(c))+\\
(2h+2)G(F(c))=4(4feF(F(c))+2hF(F(c))-G(F(c))+F(c)).
\end{eqnarray*}
Grouping them together yields
\begin{eqnarray*}
(16fe+4h^2+8h+4h)F(F(c))+2F(c)-2G(F(c))-4G(F(c))\\
-G(G(c))+2hF(G(c))-2hG(F(c))+4F(c)=0,
\end{eqnarray*}
which implies that
$$4\Delta F(F(c))+6F(c)-6G(F(c))-G(G(c))=0$$
We will refer to this equality as the Jacobi condition.

Now we want to compute $[d, x].$ Recall that $$d=\tau(y_1x-x_1y)-(2ey_1y+h(x_1y+y_1x)-2fx_1x).$$ We have
\begin{eqnarray*}
[d, x]= x(y_1x-x_1y)+\tau [c, x]-\\
(2e[y_1, x]y+x(x_1y+y_1x)+h([x_1, x]y+[y_1, x]x)-2yx_1x-2f[x_1, x]x).
\end{eqnarray*}
Thus,
\begin{eqnarray*}
[d, x]&=& xy_1x-xx_1y -xx_1y -xy_1x+2yx_1x+\tau[c, x]\\
&& -2e[y_1, x]y-h([x_1, x]y+[y_1, x]x)+2f[x_1, x]x\\ 
&=& 2(yx_1x-xx_1y)+\tau[c, x]-2e[y_1, x]y\\
&& -h([x_1, x]y+[y_1, x]x)+2f[x_1, x].
\end{eqnarray*}
Recall that 
$$[\frac{1}{2}\Delta, x]=(h-\frac{3}{2})x+2ey, [\frac{1}{2}\Delta, y]=(-h-\frac{3}{2})y+2fx.$$
So,
\begin{eqnarray*}
[\frac{1}{2}\Delta, [c, x]]=[\frac{1}{2}\Delta, [y_1, x]x-[x_1, x]y]\\
= [y_1, x]((h-\frac{3}{2})+2ey)-[x_1, x]((-h-\frac{3}{2})y+2fx)\\
=[y_1, x]hx+[y_1, x]2ey+[x_1, x]hy+\frac{3}{2}[x_1, x]y-2[x_1, x]fx-\frac{3}{2}[y_1, x]x\\
=-\frac{3}{2}[c, x]+h[y_1]x+2e[y_1, x]y+\\
h[x_1, x]y-2f[x_1, x]x-2[x_1, x]y-2[x_1, x]y+2[y_1, x]x+2[x_1, y]x.
\end{eqnarray*}
So,
\begin{eqnarray*}
[t, x]+[\frac{1}{2}\Delta, [c, x]]&=&\tau[c, x]-\frac{3}{2}[c, x]+2(yx_1x-xx_1y)\\
&&-4[x_1, x]y+2[y_1, x]x+2[x_1, y]x,
\end{eqnarray*}
we have
\begin{eqnarray*}
&& yx_1x-xx_1y-2[x_1, x]y+[y_1, x]x+[x_1, y]x\\
&=&yx_1x-xx_1y-2x_1xy+2xx_1y+x_1yx-yx_1x+[y_1, x]x\\
&=&[y_1, x]x-[x_1, x]y=[c, x].
\end{eqnarray*}
Thus,
\[[d, x]+[\frac{1}{2}\Delta, [c, x]]=\tau[c, x]+[\frac{1}{2}c, x]=[\tau c, x]-cx+[\frac{3}{2}c, x].\]

Hence $$[d, x]=[\tau +\frac{3}{2}c, x]-(cx+\frac{1}{2}[\Delta, [c, x]]).$$
We have
$$2c+[\Delta, [c, x]]=\Delta[c,x]+c[\Delta, x]+2cx-[\Delta c, x].$$
Thus, it would suffice to show that there exists $\alpha\in \mf{Z}(\mf{U}\mf{g})$ such that
$$2cx+\Delta[c, x]+c[\Delta, x]=[\alpha, x]$$
We have
\begin{eqnarray*}
2cx+c[\Delta, x]+\Delta[c, x]=\\
2cx+c(2h-3)x+4ecy+(2h\Delta+\Delta G(c))x+4e\Delta F(c)y=\\
(2h(c+\Delta F(c))+\Delta G(c)-c)x+4e(c+\Delta F(c))y.
\end{eqnarray*}
Now let us choose $\alpha\in \mf{Z}(\mf{U}\mf{g})$ such that $F(\alpha)=c+\Delta F(c).$
If we could show that $G(\alpha)-\Delta G(c)+c\in k[\tau]$ then we would be done. 
But recall that $G(F(\alpha))=F(G(\alpha))+2F(F(\alpha))$, thus our goal is equivalent to showing that
$$G(F(\alpha))-2F(F(\alpha))=F(\Delta G(c))-F(c).$$ Now, recall that
$$F(\Delta\beta)=\beta+(\Delta-1)F(\beta)-G(\beta)$$
$$G(\Delta\beta)=-3\beta-4F(\beta)\Delta+(\Delta+3)G(\beta).$$
Thus,
$$F(\Delta G(c))=G(c)+(\Delta-1)F(G(c))-G(G(c))$$
$$F(\Delta F(c))=F(C)+(\Delta-1)F(F(c))-G(F(c))$$
$$G(\Delta F(c))=-3F(c)-4F(F(c))\Delta+(\Delta+3)G(F(c)).$$
Thus the desired equality that we need to prove becomes
\begin{eqnarray*}
G(c)-7F(c)-4\Delta F(F(c))+(\Delta+3)G(F(c))\\
-2(\Delta-1)F(F(c))+2G(F(c))\\
=G(c)+(\Delta-1)F(G(c))-G(G(c))-F(c).
\end{eqnarray*}
Putting all terms on one side we get
\begin{eqnarray*}
6F(c)+4\Delta F(F(c))-2G(F(c))-G(G(c))+\\
(\Delta-1)(2F(F(c))+F(G(c)))-(\Delta+3)G(F(c))=0,
\end{eqnarray*}

which is nothing but the Jacobi condition. To summarize, we have shown that there exists $\alpha\in \mf{Z}(\mf{Ug}$)
such that $F(\alpha)=c+\Delta F(c)$ and $d-(\tau+\frac{3}{2}c)-\frac{1}{2}\alpha$ is a central element.

Now we will discuss a little bit of representation theory of $H_c$. As usual, one does this withing
the apropriately defined analog of the BGG category $\mathcal O$ for semi-simple Lie algebras. 
\begin{defin}The category $\mathcal O$ for an algebra $H_c$ is define as a full subcategory of the category of
finitely generated left $H_c$-modules
on which $h, \tau$ act diagonalizably and $e, x, y$ act locally nilpotently.
\end{defin}
Thus real parts of eigenvalues of $\tau$ are bounded from above. As usual, if $M$ belongs to the category
$\mathcal O$ then element $v\in M$ is called a maximal vector if and only if $ev=xv=yv=0.$ We also have a standard definition of
the Verma module for any pair of weights $\lambda, \mu$ (of $h, \tau$ respectively): 
$M(\lambda, \mu)=H_c\otimes_{B}k_{\lambda, \mu},$  where $B$ is a
subalgebra of $H_c$ generated by $e, x, y, h, \tau,$ and $k_{\lambda, \mu}=kv$ is its  one dimensional representation
on which $h, \tau$ act like multiplication by $\lambda, \mu.$ Clearly 
$M(\lambda, \mu)=\mf{U}(kf\ltimes (k^2)^{*})v.$ Standard argument shows that $M(\lambda, \mu)$ has a 
unique simple quotient, which will be denoted by $L(\lambda, \mu).$ 

We have a spectral decomposition of the category $O$: for any character $\chi\in Spec(\mf{Z}(H_c))$ we define
$O^{\chi}$ to be a full subcatogory of modules on which ker$(\chi)$ acts nilpotently, then  the category
$O$ decomposes into a direct sum of blocks $O^{\chi}.$

Next we would like to determine conditions of $\lambda, \mu$ which gives finite dimensional $L(\lambda, \mu)$.
So let us assume that $L(\lambda, \mu)$ is finite dimensional. Let us denote by $L^n$ weight subspace for $\tau$
corresponding to the weight $n\in k.$ Clearly there are only finitely many $n$ such that $L^n\neq 0$, and $\mu$ is
the biggest of them all (it is also clear that any two $n$ differ by an integer and their are no gaps between them) and $L^n$ 
is a $\mf{g}$-module. Then clearly there exists $m>0$ such that $f^mvL^{\mu}=x_1^mL^{\mu}=y_1^mL^{\mu}=0.$ It
is clear that $L^{\mu}$ is an irreducible $\mf{g}$ module. 
Let us write $[y^m, x_1^m]=\alpha_m$ mod$(H_cV)$ for some well defined $\alpha_m\in \mf{U}\mf{g}$ (which only depends on, $c$ and of course $n$). Thus we have that $\alpha_mL^{\mu}=0.$ Now we claim that all these conditions are sufficient for finite dimensionality.

 Indeed, at first we claim that $x^iy^{m-i}x_1L^{\mu}=0$ for all $i.$ Indeed, proceeding by induction on $i$
$$ey^ix^{m-i}x_1v-y^ix^{m-i}x_1ev=iy^{i-1}x^{m-i+1}x_1^mv=0$$
now this implies that $H_cx_1^{m}L^{\mu}\cap L^{\mu}=0$ thus $x_1^mL^{\mu}=0.$ This implies that
$y_1^ix_1^{m-i}L^{\mu}=0,$ indeed arguing by induction on $i,$ 
\\we have $(fx_1^iy_1^{m-i}-x_1^iy_1^{m-i}f)L^{\mu}=0$ thus $ix_1^{i-1}y_1^{m-i+1}L^{\mu}=0$ hence we are done. So we have the following
\begin{prop} Irreducible module $L(\lambda, \mu)$ is finite dimensional if and only if $\lambda$ is a nonegative integer and there exists $m$ such that  $\alpha_m V(\lambda, \mu)=0$ where $V(\lambda, \mu)$ is an irreducible
representation of $\mf{g}$ of heighest weight $\lambda, \mu$.

\end{prop}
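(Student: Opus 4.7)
The plan is to prove both implications of the biconditional. The sufficiency direction continues the paragraph immediately above the statement and reduces to a dimension bound; the necessity direction requires a PBW argument to extract the vanishing of $\alpha_m$ on $V(\lambda,\mu)$.

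For necessity, I would argue as follows. Assume $L=L(\lambda,\mu)$ is finite-dimensional. The top $\tau$-weight space $L^\mu$ is preserved by $\mf{g}$ and annihilated by $x,y$ (which would otherwise land in the vanishing $L^{\mu+1}$). Since $M(\lambda,\mu)^\mu=\mf{U}(kf)v_0$ is a highest-weight $\mf{sl}_2$-module of weight $\lambda$, its image $L^\mu$ is also highest-weight; combined with the irreducibility noted in the text and finite-dimensionality, $L^\mu\cong V(\lambda,\mu)$, forcing $\lambda\in\Z_{\geq 0}$. Finite-dimensionality also produces $m$ with $x_1^mL^\mu=0$; for $v\in L^\mu$ both $y^mx_1^mv$ and $x_1^my^mv$ then vanish, so $[y^m,x_1^m]v=0$. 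To deduce $\alpha_mv=0$, I would fix the PBW ordering $\mf{U}\mf{g}\cdot\Sym((k^2)^*)\cdot\Sym(k^2)$ and write $[y^m,x_1^m]=\alpha_m+r$. Because $[y^m,x_1^m]$ has $\tau$-weight $0$, every monomial of $r$ must have equal and positive $\Sym((k^2)^*)$- and $\Sym(k^2)$-degrees; since $k^2$ annihilates $v$, such monomials act as zero on $v$, giving $rv=0$ and hence $\alpha_mv=0$.

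For sufficiency, I would argue as follows. Assume $\lambda\in\Z_{\geq 0}$ and $\alpha_mV(\lambda,\mu)=0$. The text above has shown $L^\mu\cong V(\lambda,\mu)$ and $y_1^ix_1^{m-i}L^\mu=0$ for all $0\leq i\leq m$. Since $x_1,y_1$ commute in $H_c$, this upgrades to $x_1^by_1^cL^\mu=0$ whenever $b+c\geq m$. Using the PBW basis $\{x_1^by_1^cf^a\}$ of $\mf{U}(kf\ltimes(k^2)^*)$ applied to any $w\in L^\mu$, the vector $x_1^by_1^cf^aw$ vanishes whenever $b+c\geq m$ or $a>\lambda$ (the latter because $f^{\lambda+1}$ annihilates $V(\lambda,\mu)$). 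This bounds $\dim L^{\mu-k}\leq(k+1)(\lambda+1)$ for $k<m$ and $L^{\mu-k}=0$ for $k\geq m$, yielding $\dim L\leq(\lambda+1)m(m+1)/2<\infty$.

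The main obstacle is the PBW bookkeeping in the necessity argument: the correct interpretation of $[y^m,x_1^m]\equiv\alpha_m\pmod{H_cV}$ requires the ordering that places $k^2$-factors furthest to the right, and the key observation is that the $\tau$-weight-$0$ requirement on $[y^m,x_1^m]$ pairs each $(k^2)^*$-factor in $r$ with a $k^2$-factor, the latter killing $v\in L^\mu$.
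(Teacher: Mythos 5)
Your proposal follows the same route as the paper, whose proof of this proposition is precisely the paragraph preceding its statement: identify the top $\tau$-weight space $L^{\mu}$ with $V(\lambda,\mu)$, extract $m$ and the vanishing of $\alpha_m$ for necessity, and for sufficiency use the vanishing of all $x_1^by_1^c$ on $L^{\mu}$ for $b+c\geq m$ together with $f^{\lambda+1}L^{\mu}=0$ to bound the dimension. You make explicit two points the paper leaves implicit --- the $\tau$-weight-zero bookkeeping showing that the $H_cV$-remainder of $[y^m,x_1^m]$ annihilates $L^{\mu}$, and the concluding dimension count --- but the argument is otherwise the same.
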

Let us briefly discuss the case when the ground field has positive characteristic. We have the following general result
\begin{prop} Let $H_{\gamma}$ be any infinitesimal Cherednik algebra over the ground field $k$ of positive characteristic.
Then $H_{\gamma}$ is a prime Noetherian ring which is Auslander regular and Cohen-Macaulay 
and it is a finitely generated module over its center.
\end{prop}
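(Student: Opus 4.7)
My plan is to handle the four structural properties (Noetherian, prime, Auslander regular, Cohen-Macaulay) via the associated graded algebra, and to establish the finite-over-center claim by lifting the large characteristic-$p$ central subalgebra of the enveloping algebra into $\mf{Z}(H_\gamma)$.

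By the defining PBW property of an infinitesimal Cherednik algebra, $\Gr(H_\gamma) \cong \mf{U}(\mf{g}\ltimes V)$ as graded algebras. The enveloping algebra of any finite-dimensional Lie algebra is a Noetherian domain that is both Auslander regular and Cohen-Macaulay in arbitrary characteristic, and each of these four properties lifts from $\Gr$ to the filtered algebra by standard results on good (Zariskian) filtrations, e.g.\ Bj\"ork or McConnell-Robson. So I would dispatch Noetherian, prime (in fact domain), Auslander regular, and Cohen-Macaulay by citation and move on to the main point.

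For the finiteness over center, the strategy is to mirror the classical fact that for a restricted finite-dimensional Lie algebra $\mf{L}$ in characteristic $p$, the $p$-center generated by $\{Y^p - Y^{[p]} : Y\in \mf{L}\}$ is a polynomial ring over which $\mf{U}(\mf{L})$ is free of rank $p^{\dim \mf{L}}$. On the $\mf{g}$-side the lift is essentially automatic: since $\mf{g}\subseteq \mf{gl}_n$ is restricted and $V$ is a restricted $\mf{g}$-module, each $X^p - X^{[p]}$ lies in $\mf{Z}(\mf{U}\mf{g})$ and satisfies $[X^p - X^{[p]}, v] = (\ad X)^p(v) - \ad(X^{[p]})(v) = 0$ for $v\in V$, so it is already central in $H_\gamma$. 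For the $V$-side, I would first observe that for $v\in V$ the element $v^p\in H_\gamma$ is only \emph{almost} central: a Leibniz expansion of $[v^p, w]$ and $[v^p, X]$ produces $p$ identical leading contributions, which vanish in characteristic $p$, so these commutators drop by at least one filtration degree. I would then correct $v^p$ inductively by lower-order terms to produce a genuine central lift $z_v = v^p + (\text{lower order})$.

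Taking $Z'\subseteq \mf{Z}(H_\gamma)$ to be the subalgebra generated by the $X^p - X^{[p]}$ and the $z_v$ (for $X$ and $v$ ranging over bases of $\mf{g}$ and $V$), by construction $\Gr(Z')$ contains the full $p$-center of $\mf{U}(\mf{g}\ltimes V)$ with respect to the trivial $p$-structure $v^{[p]} = 0$ on the abelian factor, so $\Gr(H_\gamma)$ is a finite $\Gr(Z')$-module; a standard Rees-algebra/good-filtration argument then transfers this to $H_\gamma$ being finite over $Z'$, hence over $\mf{Z}(H_\gamma)$. The main obstacle will be the inductive construction of $z_v$: this is a Hochschild/Poisson cohomology obstruction problem for the PBW deformation $\mf{U}(\mf{g}\ltimes V) \leadsto H_\gamma$, and its resolution hinges on the characteristic-$p$ cancellation that renders $v^p$ Poisson-central in the associated graded in the first place, ensuring that the residual commutator at each filtration layer can be absorbed into a correction of strictly lower degree.
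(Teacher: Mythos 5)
Your treatment of the Noetherian/prime/Auslander-regular/Cohen--Macaulay part is fine and essentially parallels the paper, which also passes to $\Gr(H_\gamma)\cong\mf{U}(\mf{g}\ltimes V)$ and quotes lifting results (the paper cites Brown--Goodearl, using that the associated graded is a Hopf algebra of finite global dimension which is finite over its center). The $\mf{g}$-side of your centrality argument is also sound: $X^p-X^{[p]}$ is central in $\mf{U}\mf{g}$ and commutes with $V$ because $V=k^n\oplus(k^n)^*$ is a restricted $\mf{g}$-module.

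The gap is on the $V$-side. You reduce the whole statement to the existence of central lifts $z_v=v^p+(\text{lower order})$, and you yourself flag the inductive construction of $z_v$ as ``the main obstacle''; that obstacle is never overcome. Knowing that $[v^p,w^*]$ drops in filtration degree only says that the symbol of $v^p$ is Poisson-central; it does not say that the residual commutator at each lower filtration layer is a coboundary, i.e.\ that it can be absorbed into a correction term. That cohomological vanishing is exactly the nontrivial point, and no argument is offered for it. The paper sidesteps the issue entirely by an exact computation showing that $v^{p^2}$ (not a corrected $v^p$) is central on the nose: $v^p$ is already central in the subalgebra $\mf{U}(\mf{g}\ltimes k^n)$ (one may take $v^{[p]}=0$ there since $(\ad v)^2=0$ on $\mf{g}\ltimes k^n$), and
$$[v^{p^2},w^*]=\ad(v^p)^{p}(w^*)=\ad(v^p)^{p-1}\bigl([v^p,w^*]\bigr),$$
while $[v^p,w^*]=(\ad v)^p(w^*)$ lies in $\mf{U}(\mf{g}\ltimes k^n)$, on which $\ad(v^p)$ vanishes; hence $[v^{p^2},w^*]=0$ with no corrections needed. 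Replacing your $z_v$ by $v^{p^2}$ (and likewise using $(w^*)^{p^2}$ for the dual copy) closes the gap, and the resulting central subalgebra still has associated graded large enough that $\Gr(H_\gamma)$, and hence $H_\gamma$, is a finite module over it.
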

\begin{proof} The associated graded of $H_{\gamma}$ is a Hopf algebra of finite global
dimension which is finite dimensional over the center, after applying general results
of [BG], we get that $H_{\gamma}$ is a prime Noetherian ring, which is also Auslander regular and Cohen-Macaulay.
Thus it remains to show that $H_{\gamma}$ is a finitely generated module over the center.

By the definition of $H_{\gamma},$ algebras $\mf{U}(\mf{g}\ltimes k^n),\mf{U}(\mf{g}\ltimes (k^n)^{*})$ are its subalgebras, therefore there exists big enough $n,$ such that all restricted powers $g^{[p^n]}$ are central for any $g\in\mf{g}.$ Now we claim that for any $v\in k^n,$ $v^{p^2}$ belongs to the center of $H_{\gamma}.$ Indeed, it is clear that this elemen commutes
$\mf{g},$ thus we just need to show that $[v^{p^2}, w^{*}]=0$ for all $w^*\in (k^n)^{*}.$ We have 
$[v^{p^2}, w^{*}]=ad(v^p)^p(w^*)=ad(v^p)^{p-1}([v^{p},w^*]),$ but since $[v^{p}, w^{*}]=ad(v)^p(w^{*})\in \mf{U}(\mf{g}\ltimes k^n),$ we get that $[v^{p^2}, w{^*}]=0.$ Therefore $(k^n)^{p^2}\in \mf{Z}(H_c),$ similarly
$((k^n)^{*})^{p^2}\in \mf{Z}(H_c),$ therefore we may conclude that $H_{\gamma}$ is finite dimensional over its center.

\end{proof}

\end{document}